\documentclass{amsart}
\usepackage{amsmath,amsfonts,amssymb}
\hfuzz1pc 

\usepackage{cite}
\usepackage{mathrsfs}
\usepackage{dsfont}

\newtheorem{theorem}{Theorem}[section]

\newtheorem{corollary}[theorem]{Corollary}

\newtheorem{definition}[theorem]{Definition}

\newtheorem{remark}[theorem]{Remark}

\newcounter{figures}[section]

\def\bN{{\mathbb N}}

\def\bR{{\mathbb R}}


\def\cC{\mathcal{C}}

\def\cF{\mathcal{F}}

\def\cM{\mathcal{M}}

\def\cS{\mathcal{S}}


\def\supp{{\rm{supp}\, }}





\def\cprime{$'$}

\newcommand{\brac}[1]{\langle #1\rangle_{{\vec{a}}}}

\begin{document}
\title[Fourier multipliers on anisotropic mixed-norm spaces]
{Fourier multipliers on anisotropic mixed-norm spaces of distributions}

\author{Galatia Cleanthous}
\address{Department of Mathematics\\
Aristotle University of Thessaloniki\\ Thessaloniki 541 24, Greece}
\email{galatia.cleanthous@gmail.com}

\author{Athanasios G. Georgiadis}

\author{Morten Nielsen}
\address{Department of Mathematical Sciences\\ Aalborg University\\ Skjernvej 4A\\ DK-9220 Aalborg\\ Denmark}
\email{nasos@math.aau.dk}
\email{mnielsen@math.aau.dk}
\subjclass{42B15, 42B25, 42B35, 46F99.}
\keywords{anisotropic geometry, Besov spaces, equivalent norms, Fourier multipliers, H\"{o}rmander condition, mixed-norms, Sobolev spaces, tempered distributions,  Triebel-Lizorkin spaces.}

\begin{abstract}
A new general H\"{o}rmander type condition involving anisotropies and mixed norms is introduced, and boundedness results for Fourier multipliers on anisotropic Besov and Triebel-Lizorkin spaces of distributions with mixed Lebesgue norms are obtained. As an application, the continuity of such operators is established on mixed Sobolev and Lebesgue spaces too. Some lifting properties and equivalent norms are obtained as well.
\end{abstract}

\maketitle

\section{Introduction}\label{Introduction}
\setcounter{equation}{0}
The study of spaces of functions and distributions and operators on such spaces play an essential role in harmonic analysis. Several  branches of both pure and applied mathematics make extensive use of such spaces,  including in the study of partial differential equations, approximation theory, probability, statistics, and signal processing.

Some of the most general and applicable families of functions spaces in analysis are the Besov and Triebel-Lizorkin spaces. The two families are interesting in their own right, but their importance also stem from the fact that several of the classical function spaces such as Lebesgue, Hardy, BMO, Sobolev, and H\"{o}lder spaces can be recovered as special cases. The  Besov and Triebel-Lizorkin spaces have been studied for many different reasons and in a variety of settings and circumstances. For further details we refer the reader to \cite{B0,BH,CGN,JS,KP,KPY,LYY,YY} and to the references found therein. 

The purpose of this article is to study Fourier multipliers in the general setting of anisotropic Triebel-Lizorkin spaces based on mixed-norm Lebesgue spaces. Let us now elaborate further on this particular setting.

Anisotropic phenomena appear naturally in various fields of analysis, both pure and applied. A classical example found in \cite{KG} is the case of differential operators with anisotropic symbols. Such operators naturally introduce a need for anisotropic function classes containing functions compatible with the particular anisotropy. The notion of anisotropic Besov spaces goes back to Nikol{\cprime}ski{\u\i} \cite{Nik} and the notion of anisotropic Triebel-Lizorkin spaces can be found in Triebel's book \cite[p.\ 269]{T}. More recently, anisotropic Besov spaces in a more general setting have been studied by Bownik \cite{B0} and the anisotropic Triebel-Lizorkin spaces have been studied by Bownik and Ho \cite{BH}.

Mixed-norm Lebesgue spaces are useful as a framework for several  problems arising in physics demanding different regularity in every direction (e.g.\ in time and in space). Mixed-norm Besov and Triebel-Lizorkin spaces have been studied during this decade by Johnsen and Sickel as one may see for example \cite{JS}. Here we will work on anisotropic mixed-norm Besov and Triebel-Lizorkin spaces.

Fourier multipliers form one of the fundamental and most important classes  of operators  in harmonic analysis. Their importance is emphasized by their close link to partial differential operators through the Fourier transform, and there has been a continuous interest in the study of boundedness properties of multipliers on $L^p$ and other spaces  since the seminal work by  Marcinkiewicz \cite{Mar}, Mihlin \cite{M} and H\"{o}rmander \cite{H}. Numerous variations and generalizations of the above works have been produced during the past years, see \cite{BB,BB2,CK,DOS,DY,G,KM,LM,YYZ,Hy2} and the references therein.

In this article we prove boundedness of a suitable class of Fourier multipliers on anisotropic Besov and Triebel-Lizorkin spaces with mixed Lebesgue norms. Moreover, we also  introduce a new general H\"{o}rmander-type class of multipliers naturally adapted to mixed norms and  to the general anisotropic setting.

Let us summarize the main contributions in this paper.

$(\alpha)$ We introduce a new and general condition, involving mixed norms and the anisotropy, for H\"{o}rmander multipliers, see (\ref{A_tp}).

$(\beta)$ We prove the boundedness of Fourier multipliers on anisotropic mixed-norm Besov and Triebel-Lizorkin space, see Theorem \ref{Thmain}.

$(\gamma)$ The continuity of Fourier multipliers on mixed Lebesgue and Sobolev spaces will be obtained, under the new condition as well, see Corollaries \ref{Csobolev}-\ref{Csobolev3}.

$(\delta)$ An equivalent norm characterization for the anisotropic mixed-norm Besov and Triebel-Lizorkin spaces is revisited, see Corollary \ref{Cequivallent}.

\subsection*{Notation} We will denote by $\cF(f)(\xi):=\hat{f}(\xi):=\int_{\bR^n}f(x)e^{-ix\cdot\xi}dx$  the Fourier  transform of (suitably nice) $f$, where $x\cdot\xi:=x_1\xi_1+\cdots+x_n\xi_n$  is the standard inner product on $\bR^n$. The inverse Fourier transform is then given by $\cF^{-1}f(x):=\hat{f}(-x)$.
For $\vec{t}=(t_1,\dots,t_n)\in \bR^n$ with $t_1,\dots,t_n\neq0$, we set $\frac{1}{\vec{t}}:=\big(\frac{1}{t_1},\dots,\frac{1}{t_n}\big)$. The sets of positive and non-negative integers will be denoted by $\bN$ and $\bN_0$ respectively. For  $\gamma$  a multi-index $\gamma=(\gamma_1,\dots,\gamma_n)\in\bN_0^n$, we denote by $|\gamma|:=\gamma_1+\cdots+\gamma_n$ its length and we set $\partial^\gamma f:=\partial^{\gamma_1}_{x_1}\cdots\partial^{\gamma_n}_{x_n}f$. By $\cS:=\cS(\mathbb{R}^n)$ we denote the Schwartz class on $\mathbb{R}^n$ and by $\cS'$ its dual; the tempered distributions. The $N\in\bN_0$ times differential functions on $\mathbb{R}^n$ is denoted by $\mathcal{C}^N$. Finally, any positive constant will be denoted $c$, or as $c_{\alpha}$ if it depends on a significant parameter $\alpha$. 

\section{Preliminaries}\label{preliminaries}
\subsection{Mixed norm Lebesgue spaces}
Let $\vec{p}=(p_1,\dots,p_n),$ with $0<p_1,\dots,p_n\le\infty$ and let $f:\mathbb{R}^n\rightarrow \mathbb{C}$ be measurable. We say that $f\in L^{\vec{p}}=L^{\vec{p}}(\mathbb{R}^n)$ if
\begin{equation*}
\|f\|_{\vec{p}}:= \left(\int_\mathbb{R}\cdots\left(\int_\mathbb{R}\left(\int_\mathbb{R} |f(x_1,\dots,x_n)|^{p_1} dx_1\right)^{\frac{p_2}{p_1}} dx_2\right)^{\frac{p_3}{p_2}}\cdots dx_n\right)^{\frac{1}{p_n}}<\infty,
\end{equation*}
with the standard modification when $p_k=\infty,$ for some $1\le k\le n.$ The quasi-norm $\|\cdot\|_{\vec{p}},$ is a norm when $\min(p_1,\dots,p_n)\ge1$ and turns $L^{\vec{p}}$ into a Banach space. For further properties of $L^{\vec{p}}$ see for example \cite{AI,Ba,BP,F,L}.

Let $R\subset\bR^n$. We denote by $\|f\|_{L^{\vec{p}}(R)}:=\|f\chi_R\|_{\vec{p}}$, where $\chi_R$ is the characteristic function of $R$. For example when $R:=I_1\times\cdots\times I_n\subset\bR^n$ is a rectangle, we obtain
\begin{equation*}
\|f\|_{L^{\vec{p}}(R)}= \left(\int_{I_n}\cdots\left(\int_{I_2}\left(\int_{I_1} |f(x_1,\dots,x_n)|^{p_1} dx_1\right)^{\frac{p_2}{p_1}} dx_2\right)^{\frac{p_3}{p_2}}\cdots dx_n\right)^{\frac{1}{p_n}}.
\end{equation*}

For $\vec{p}\in[1,\infty]^n$ we define the \textit{conjugate} $\vec{p'}:=(p_1',\dots,p_n')\in [1,\infty]^n$ by requiring that $1/p_k+1/p_k'=1$ for every $k=1,\dots,n$.\

The \textit{mixed H\"{o}lder-inequality} (see e.g. \cite{BP}) is the following estimate: For every $\vec{p}=[1,\infty]^n$, $f\in L^{\vec{p}}$ and $g\in L^{\vec{p'}}$ we have
\begin{equation}\label{mixedholder}
\left|\int_{R^n} f(x)\overline{g}(x) dx\right|\leq \|f\|_{\vec{p}}\|g\|_{\vec{p'}}.
\end{equation}

We will also need an adapted version of the Hausdorff-Young inequality. The \textit{mixed Hausdorff-Young's Theorem} \cite{BP} asserts that if $\vec{t}=(t_1,\dots,t_n)$ with $1\leq t_n\leq t_{n-1}\le \cdots \leq t_1\leq 2$, then for every $f\in\cS$,
\begin{equation}\label{Penedek}
\|\hat{f}\|_{\vec{t'}}\leq \|f\|_{\vec{t}}.
\end{equation}

\subsection{Anisotropic geometry}\label{subsec:AD}
Let $b,x\in\mathbb{R}^n$ and $\lambda>0$. We denote by $\lambda^{b}x:=(\lambda^{b_1}x_1,\dots,\lambda^{b_n}x_n).$ We fix a vector $\vec{a}\in[1,\infty)^n$, and we introduce the anisotropic quasi-homogeneous norm $|\cdot|_{\vec{a}}$ as follows: We set $|0|_{\vec{a}}:=0$, and for $x\neq0$ we let $|x|_{\vec{a}}:=\lambda_0,$ where $\lambda_0$ is the unique positive number such that $|\lambda_0^{-\vec{a}}x|=1.$ One observes immediately that
\begin{equation}\label{ad1}
|\lambda^{\vec{a}}x|_{\vec{a}}=\lambda|x|_{\vec{a}},\;\;\text{for every}\;\;x\in\mathbb{R}^n,\;\lambda>0.
\end{equation}
From this we notice that $|\cdot|_{\vec{a}}$ is not a norm unless $\vec{a}=(1,\dots,1)$, where
it coincides with the 
Euclidean norm.

We have a link between the anisotropic and the Euclidean geometry (see \cite{B,BH}): There exist constants $c_1,c_2>0$ such that for every $x\in\mathbb{R}^n,$ 
\begin{equation}\label{ad8}
c_1(1+|x|_{\vec{a}})^{a_m}\le1+|x|\le c_2(1+|x|_{\vec{a}})^{a_M}.
\end{equation}
where we denoted $a_m:=\min_{1\le j\le n}a_j,\;a_M:=\max_{1\le j\le n}a_j.$

Finally, we will need the so-called \textit{homogeneous dimension}:
\begin{equation}\label{nu}
  \nu:=|\vec{a}|=a_1+\cdots+a_n.
  \end{equation}

\subsection{Anisotropic mixed-norm Triebel-Lizorkin and Besov spaces}\label{subsec:TL}
In this section define the anisotropic mixed-norm smoothness spaces needed for our analysis, and we discuss some corresponding Fefferman-Stein vector-valued maximal function estimates.

Let $\varphi_0\in \mathcal{S}$ (the class of Schwartz functions) be such that
\begin{equation}\label{phi01}
\supp(\widehat{\varphi_0})\subseteq 2^{\vec{a}}[-2,2]^n=:R_0,
\end{equation}
\begin{equation}\label{phi02}
|\widehat{\varphi_0}(\xi)|\geq c>0 \ \ \ \text{if} \ \ \xi\in2^{\vec{a}}[-5/3,5/3]^n.
\end{equation}
and let $\varphi\in \mathcal{S}$ be such that
\begin{equation}\label{phi1}
\supp(\hat{\varphi})\subseteq [-2,2]^n\setminus (-1/2,1/2)^n=:\widetilde{R_1},
\end{equation}
\begin{equation}\label{phi2}
|\hat{\varphi}(\xi)|\geq c>0 \ \ \ \text{if} \ \ \xi\in[-5/3,5/3]^n\setminus (-3/5,3/5)^n.
\end{equation}
Note that it is possible to choose $\varphi,\varphi_0$ satisfying the partition of unity condition
\begin{equation}\label{phi3}
\widehat{\varphi_{0}}(\xi)+\sum\limits_{j=1}^{\infty}\hat{\varphi}(2^{-j\vec{a}}\xi)=1 \ \ \ \text{for every} \ \ \xi\in\mathbb{R}^n.
\end{equation}

We define the ``rectangular version" of the annulus by 
\begin{equation}\label{Rj}
R_j:= 2^{j\vec{a}} \widetilde{R_1},\;j\geq 1\;\;\text{and}\;\;R_j:=\emptyset,\;j<0.
\end{equation}

Note that the punctured rectangle $\widetilde{R_1}$ can be expressed as the (almost) disjoint union of $k_n:=2^{3n}-2^{n}$ closed dyadic cubes $\{Q_\mu\}_{\mu=1,\dots,k_n}$ of side-length $2^{-2}$. Then for every $j\ge1$ we have that $R_j=\cup_{\mu=1}^{k_n}2^{j\vec{a}}Q_\mu$. For every $\vec{p}\in(0,\infty]^n$, we have a two sided estimate of the mixed-norm on $R_j$  as
\begin{equation}\label{mixedRj}
\|f\|_{L^{\vec{p}}(R_j)}\asymp\sum\limits_{\mu=1}^{k_n}\|f\|_{L^{\vec{p}}(2^{j\vec{a}}Q_\mu)},
\end{equation}
where the constants in the equivalence depends only on $\vec{p}$ and $n$.

We denote by $\varphi_{j}(x):=2^{\nu j}\varphi(2^{j\vec{a}}x), \ j\in\mathbb{N}$. We then have $\widehat{\varphi_{j}}(\xi)=\widehat{\varphi}(2^{-j\vec{a}}\xi),$ so by (\ref{phi1})
\begin{equation}\label{phi7}
\supp(\widehat{\varphi_j})\subseteq 2^{j\vec{a}} \supp(\widehat{\varphi})\subseteq R_{j} \ \ \ \text{for every} \ \ j\in\bN.
\end{equation}

Let us now recall the definition of anisotropic mixed-norm Triebel-Lizorkin and Besov spaces (see for example \cite{JS}).\\

For $s\in\mathbb{R},\;\vec{p}\in(0,\infty)^n,\;q\in(0,\infty]$ and $\vec{a}\in[1,\infty)^n,$ the anisotropic mixed-norm Triebel-Lizorkin space $F^s_{\vec{p}q}(\vec{a})$ is defined, as the set of all $f\in \mathcal{S}'$ such that 
\begin{equation}\label{TLnorm}
\|f\|_{F^s_{\vec{p}q}(\vec{a})}:=\Big \| \Big( \sum _{j=0}^{\infty} (2^{sj}|\varphi_{j}\ast f|)^q \Big) ^{1/q}\Big \|_{\vec{p}}<\infty,
\end{equation}
For $s\in\mathbb{R},\;\vec{p}\in(0,\infty]^n,\;q\in(0,\infty]$ and $\vec{a}\in[1,\infty)^n,$ the anisotropic mixed-norm Besov space $B^s_{\vec{p}q}(\vec{a})$ is defined, as the set of all $f\in \mathcal{S}'$ such that 
\begin{equation}\label{Bnorm}
\|f\|_{B^s_{\vec{p}q}(\vec{a})}:= \Big( \sum _{j=0}^{\infty} (2^{sj}\|\varphi_{j}\ast f\|_{\vec{p}})^q \Big) ^{1/q}<\infty,
\end{equation}
with the $\ell_q$-norm replaced by $\sup _{j}$ if $q=\infty$ for both $F^{s}_{\vec{p}q}(\vec{a})$ and $B^{s}_{\vec{p}q}(\vec{a})$. Note that $F^{s}_{pq}(\vec{a})=F^{s}_{\vec{p}q}(\vec{a})$ for $\vec{p}=(p,\dots,p)$ (and the same holds true for the Besov spaces). Further properties of $F^{s}_{\vec{p}q}(\vec{a})$ and $B^{s}_{\vec{p}q}(\vec{a})$ can be found in \cite{JHS,GN,Sch,ST}.

\begin{remark} 
One can easily verify that the definition of mixed-norm Triebel-Lizorkin and Besov spaces based on test functions with Fourier transforms having supports in a classical annulus, such as considered in e.g.\ \cite{JS}, is equivalent to the above definitions.
\end{remark}

\subsection{Maximal operators}\label{subsec:maxop}
Maximal operators will be an essential tool  in the proof of our main result. Let $1\leq k\leq n$. We define for $f\in L^1_\text{loc}(\mathbb{R}^n)$,
\begin{equation}\label{MK}
M_k f(x):=\sup\limits_{I\in I_x^k} \dfrac{1}{|I|} \int_I |f(x_1,\dots,y_k,\dots,x_n)| dy_k,
\end{equation}
where $I_x^k$ is the set of all intervals $I$ in $\mathbb{R}_{x_k}$ containing $x_k$.\

We will use extensively the following \textit{iterated maximal operator}: for $f\in L^1_\text{loc}(\mathbb{R}^n)$ we let
\begin{equation}\label{Max1}
\cM_{\vec{r}} f(x):=\left(M_n\Big(\cdots M_2(M_1|f|^{r_1})^{r_2/r_1}\cdots\Big)^{r_n/r_{n-1}}\right)^{1/r_n}(x),\;\vec{r}\in(0,\infty)^n,\;x\in\mathbb{R}^n.
\end{equation}

We shall need a variation of \textit{Fefferman-Stein} vector-valued maximal inequality (see \cite{Ba,JS}):\
If $\vec{p}=(p_1,\dots,p_n)\in(0,\infty)^n,\;q\in(0,\infty]$ and $\vec{r}=(r_1,\dots,r_n)\in(0,\infty)^n$ with $r_k<\min(p_1,\dots,p_k,q)$ for every $k=1,\dots,n$ then
\begin{equation}\label{max}
\Big\|\Big(\sum_{j\geq0} \big(\cM_{\vec{r}}(f_j)\big)^q
\Big)^{1/q}\Big\|_{\vec{p}}\leq c\Big\|\Big(\sum_{j\ge0} |f_{j}|^q
\Big)^{1/q}\Big\|_{\vec{p}}.
\end{equation}

By \cite[Proposition 3.11]{JS}, we obtain the following compound result: For every $\vec{r}\in(0,\infty)^n$ there exists a constant $c>0,$ such that for every $\vec{b}=(b_1,\dots,b_n)\in(0,\infty)^n$ and $f$ with $\supp(\hat{f})\subset [-b_1,b_1]\times\cdots\times[-b_n,b_n],$
\begin{equation}\label{M3}
\sup_{z\in\mathbb{R}^n} \dfrac{|f(x-z)|}{(1+|b_1z_1|)^{1/{r_1}}\cdots(1+|b_nz_n|)^{1/{r_n}}}\leq c\cM_{\vec{r}} f(x),\;x\in\mathbb{R}^n.
\end{equation}

\section{Fourier multipliers}\label{sec:psiDO}

\subsection{Anisotropic Fourier multipliers}\label{se:mult} One of the most classical problems in harmonic analysis is the boundedness of Fourier multipliers between suitable function (or distribution) smoothness spaces. A bounded function $m=m(\xi)$ on $\mathbb{R}^n$ is called a multiplier. 
The corresponding Fourier multiplier operator is given by 
\begin{equation}
\label{dfm}T_{m}f(x):=\int_{\mathbb{R}^{n}}m(\xi)\hat{f}(\xi)e^{ix\cdot\xi}d\xi,\ \text{for every} \ \ x\in\mathbb{R}^n, \ \ f\in\mathcal{S}.
\end{equation}

The question of boundedness of $T_{m}$ is extremely well studied in the Euclidean setting as well as on manifolds, groups, symmetric spaces, and in many other settings. See for example \cite{G,H,KM,LM,M} and the references therein. 

Fourier multipliers on Triebel-Lizorkin spaces have been studied by Triebel in \cite{T2}.
For anisotropic Besov and Triebel-Lizorkin spaces we refer to the articles \cite{BB,BB2} of B\'{e}nyi and Bownik. 

Yang and Yuan in \cite{YY} introduced some general scales; Triebel-Lizorkin-type spaces. For Fourier multipliers on such spaces, see D.\ Yang et.\ al.\ \cite{YYZ}.

Perhaps the most well-known multiplier conditions (with respect to anisotropic geometry) are the following Mihlin and H\"{o}rmander conditions, which we will state as $L^{\infty}$ and $L^{2}$ conditions for compatibility with the new condition that we are going to introduce below.

Let $\alpha\in\bR,\;N\in\bN$ and $m\in\cC^N$, we say that $m$ satisfies the:

1. $L^{\infty}$-condition when 
\begin{equation}
\label{multiinfty}\sup_{|\gamma|\le N}\sup_{\xi\in\bR^n}\left|(1+|\xi|_{\vec{a}})^{-\alpha+\vec{a}\cdot\gamma}\partial^{\gamma}m(\xi)\right|<\infty.
\end{equation}

2. $L^{2}$-condition when
\begin{equation}
\label{multi2}\sup_{|\gamma|\le N}\sup_{j\ge0}\left \{
2^{-j\alpha}2^{j\vec{a}\cdot\gamma}2^{-j\nu/2}\|\partial^\gamma m\|_{L^{2}(R_j)}\right \}<\infty,
\end{equation}
where $R_j$ are as in (\ref{phi01}) and (\ref{Rj}).

\begin{remark}
1. The conditions (\ref{multiinfty}) and (\ref{multi2}) are the anisotropic analogues of the classical inhomogeneous ones, see \cite{BB,BB2,H,M}, with the extra parameter $\alpha\in\bR$, which allows us to interplay between different smoothness levels as in \cite{CK,YYZ}. 

2. Under the isotropic versions of the above conditions, Antoni\'{c} and Ivec in the recent paper \cite{AI}, proved the boundedness of Fourier multipliers on mixed Lebesgue and Sobolev spaces.

3. It is not hard to see that $\|1\|_{L^{2}(R_j)}=c2^{j\nu/2}$, for every $j\in\bN_0$, so the $L^{2}$-condition is 
sharper than the $L^{\infty}$-condition.

4. Multipliers on anisotropic homogeneous mixed-norm spaces are considered by the present authors in \cite{CGN_ACHA}.
\end{remark}

\subsection{A new class of multipliers} Here we introduce a new class of multipliers replacing the conditions (\ref{multiinfty}) and (\ref{multi2}) above by a mixed-norm $L^{\vec{t}}$-condition of H\"{o}rmander type. Before of this we give the following definition

\begin{definition}\label{def1}
A vector $\vec{t}=(t_1,\dots,t_n)\in [1,2]^n$ with $1\le t_n\leq t_{n-1}\leq \cdots \leq t_1\leq 2$ will be called admissible. We also denote by $t:=\frac{1}{t_1}+\cdots+\frac{1}{t_n}.$
\end{definition}

We proceed now to define multiplier classes with respect to both the anisotropy and the mixed-norms.
\begin{definition}\label{defA_tp}
Let $\vec{a}\in[1,\infty)^n$. Given $\alpha\in\bR$, an admissible $\vec{t}$ and $N\in\bN$,
we say that the multiplier $m\in \cC^N(\mathbb{R}^n)$ satisfies the $L^{\vec{t}}$-condition, or that it belongs to the class $\mathcal{A}(\alpha,\vec{t},N)$, if 
\begin{equation}\label{A_tp}
A_{\alpha,\vec{t},N}(m):=\sup\limits_{|\gamma|\leq N}\sup\limits_{j\geq 0} \left \{
2^{-j\alpha}2^{j\vec{a}\cdot\gamma}2^{-j\vec{a}\cdot\frac{1}{\vec{t}}}\big\|\partial ^\gamma m\big\|_{L^{\vec{t}}(R_j)}\right\}<\infty,
\end{equation}
where the $R_j$'s are as in (\ref{phi01}) and (\ref{Rj}).
\end{definition}

We have the followings remarks pertaining to Definition \ref{defA_tp}:

\begin{remark}\label{rema1}
Note that:

\begin{itemize}
	\item [1.] When $\vec{t}=(2,\dots,2)$, the $L^{\vec{t}}$-condition (\ref{A_tp}) coincides with the H\"{o}rmander $L^2$-condition (\ref{multi2}).

\item [2.] We observe that $\|1\|_{L^{\vec{t}}(R_j)}=c2^{j\vec{a}\cdot\frac{1}{\vec{t}}}$, for every $j\in\bN_0$, and $\vec{t}$ admissible. Then from (\ref{ad8}), we conclude that the $L^{\vec{t}}$-condition is 
sharper than the $L^{\infty}$-condition.

\item [3.] For every $\vec{t},\vec{r}\in[1,2]^n$, with $\vec{t}\le\vec{r}$ (i.e., $t_j\le r_j,\;j=1,\dots,n$) we have 
\begin{equation}\label{1,t,2}
2^{-j\nu}\|f\|_{L^{1}(R_j)}\le2^{-j\vec{a}\cdot\frac{1}{\vec{t}}}\|f\|_{L^{\vec{t}}(R_j)}
\le2^{-j\vec{a}\cdot\frac{1}{\vec{r}}}\|f\|_{L^{\vec{r}}(R_j)}\le2^{-j\nu/2}\|f\|_{L^{2}(R_j)}.
\end{equation}
\end{itemize}

\end{remark}

\subsection{The main result}
We now proceed to state our main result. However, we need first to fix some notation.
\begin{definition}\label{def2}
For every vector $\vec{p}=(p_1,\dots,p_n)\in (0,\infty)^n$ and every $q\in(0,\infty]$, we set $\mu_j:=\min(p_1,\dots,p_j,q)$,$\;j=1,\dots,n$ for the case of Triebel-Lizorkin and  $\mu_j:=\min(p_1,\dots,p_j)$,$\;j=1,\dots,n$ for the case of Besov spaces. In every case we denote by $\mu:=\frac{1}{\mu_1}+\cdots+\frac{1}{\mu_n}.$
\end{definition}

Our main multiplier result is the following.

\begin{theorem}\label{Thmain}
Let $\alpha,s\in\mathbb{R},\;\vec{p}=(p_1,\dots,p_n)\in (0,\infty)^n,\;q\in(0,\infty],\;\vec{a}\in [1,\infty)^n,$ an admissible vector $\vec{t}$ and $N\in\bN$ with $N>\mu+t,$ where $t,\mu$ as in Definitions \ref{def1} and \ref{def2}.

If $m$ is a multiplier in the class $\mathcal{A}(\alpha,\vec{t},N)$, then the Fourier multiplier
$T_m$ is bounded from $F^{s+\alpha}_{\vec{p}q}(\vec{a})$ to $F^{s}_{\vec{p}q}(\vec{a})$
and from $B^{s+\alpha}_{\vec{p}q}(\vec{a})$ to $B^{s}_{\vec{p}q}(\vec{a})$.
\end{theorem}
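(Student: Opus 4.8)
The plan is to reduce the mapping property of $T_m$ to a Fourier-analytic estimate on each frequency annulus, then combine the pieces using the Fefferman--Stein maximal inequality \eqref{max} together with the pointwise control \eqref{M3}. Write $\psi_j := \varphi_j$ for $j\ge 1$ and $\psi_0:=\varphi_0$, so that $\psi_j$ is a Littlewood--Paley piece with $\supp(\widehat{\psi_j})\subseteq R_j$. For $f$ in the relevant space, decompose $T_m f = \sum_{j\ge 0}\psi_j * T_m f$ and note that, since $\widehat{\psi_j}$ is supported in $R_j$, we have $\widehat{\psi_j * T_m f} = \widehat{\psi_j}\, m\, \widehat{f}$, which only sees the behaviour of $m$ on (a neighbourhood of) $R_j$. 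The key step is therefore to prove, for each fixed $j$, a convolution-type estimate of the form
\begin{equation*}
2^{sj}|\psi_j * T_m f(x)| \le c\, A_{\alpha,\vec{t},N}(m)\, 2^{(s+\alpha)j}\,\cM_{\vec r}\!\big(\psi_{j-1}*f + \psi_j*f + \psi_{j+1}*f\big)(x),
\end{equation*}
for a suitable $\vec r$ with $r_k < \mu_k$ for all $k$ (possible since $N > \mu + t$ leaves room; more precisely one chooses $\vec r$ admissible-adjacent so that also the Hausdorff--Young exponents match the $\vec t$ in the hypothesis). Here I use that $\psi_j = (\psi_{j-1}+\psi_j+\psi_{j+1})*\psi_j$ up to a harmless modification of the partition of unity, so that $\psi_j*T_m f = K_j * (\text{nearby LP pieces of }f)$ with kernel $K_j$ whose Fourier transform is $\widehat{\psi_j}\,m$.

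To obtain that pointwise bound, I would estimate the kernel $K_j$. Write $K_j(y) = \cF^{-1}(\widehat{\psi_j}\, m)(y)$; after the anisotropic rescaling $y = 2^{-j\vec a}z$ this becomes an inverse Fourier transform of $\widehat{\psi}(z)\,m(2^{j\vec a}z)$ living at unit frequency scale. The mixed Hausdorff--Young inequality \eqref{Penedek} applied to this rescaled object, combined with integration by parts encoded through the derivatives $\partial^\gamma m$ for $|\gamma|\le N$ (which produce the decay factors $(1+|b_k y_k|)^{-N}$-type weights needed to invoke \eqref{M3}), converts the $L^{\vec t}$-norm of $\partial^\gamma m$ over $R_j$ — precisely the quantity controlled by $A_{\alpha,\vec t,N}(m)$ — into an $L^{\vec t'}$ bound on $K_j$ with the correct anisotropic weights. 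The normalization factors $2^{-j\alpha}$, $2^{j\vec a\cdot\gamma}$ and $2^{-j\vec a\cdot\frac1{\vec t}}$ appearing in \eqref{A_tp} are exactly the ones dictated by this rescaling, the $\partial^\gamma$ scaling, and the Hausdorff--Young exchange of $L^{\vec t}$ for $L^{\vec t'}$. One then uses \eqref{M3} (with $\vec b \sim 2^{j\vec a}$, legitimate because the nearby LP pieces are frequency-supported in a dilate of $R_j$) to dominate the convolution of $K_j$ against the nearby pieces of $f$ by $\cM_{\vec r}$ applied to those pieces, picking up the factor $2^{\alpha j}$ in the process.

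Finally I assemble: raising the pointwise bound to the $q$-th power, summing in $j$, and taking $\|\cdot\|_{\vec p}$, the Fefferman--Stein inequality \eqref{max} removes the maximal operators, leaving
\begin{equation*}
\|T_m f\|_{F^{s}_{\vec p q}(\vec a)} \le c\, A_{\alpha,\vec t,N}(m)\,\Big\| \Big(\sum_{j\ge 0}\big(2^{(s+\alpha)j}|\psi_j*f|\big)^q\Big)^{1/q}\Big\|_{\vec p} \approx A_{\alpha,\vec t,N}(m)\,\|f\|_{F^{s+\alpha}_{\vec p q}(\vec a)},
\end{equation*}
and the Besov case is identical but simpler, taking $\|\cdot\|_{\vec p}$ inside before the $\ell^q$ sum and using only the boundedness of $\cM_{\vec r}$ on $L^{\vec p}$ rather than the vector-valued inequality (hence $\mu_j = \min(p_1,\dots,p_j)$ without the $q$). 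I expect the main obstacle to be the kernel estimate in the middle paragraph: one must carefully track how the $N$ derivatives are distributed so that the mixed (non-symmetric in the coordinates) Hausdorff--Young inequality \eqref{Penedek} can be applied coordinate by coordinate while simultaneously generating enough anisotropic polynomial decay in each variable $y_k$ to feed into \eqref{M3}; the condition $N > \mu + t$ is precisely what guarantees there are enough derivatives to close this, and getting the bookkeeping of the exponents $\frac1{\vec t}$, $\frac1{\vec t'}$, $\vec a\cdot\gamma$ and $\nu$ consistent across the rescaling is the delicate part.
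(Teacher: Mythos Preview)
Your proposal is correct and follows essentially the same route as the paper: localize $\varphi_j\ast T_m f$ as a convolution against a kernel whose Fourier transform is $m$ times nearby Littlewood--Paley bumps, rescale anisotropically to unit scale, use $x^\gamma\cF^{-1}g=\cF^{-1}(\partial^\gamma g)$ together with the mixed Hausdorff--Young inequality \eqref{Penedek} and a mixed H\"older step to bound the weighted kernel by $A_{\alpha,\vec t,N}(m)$, then invoke \eqref{M3} and the Fefferman--Stein inequality \eqref{max}. The only cosmetic difference is that the paper puts the neighboring bumps into the kernel (writing $\varphi_j\ast T_m f = 2^{j\alpha}\cF^{-1}(m_{(j)})\ast(\varphi_j\ast f)$ with $m_{(j)}=2^{-j\alpha}m\sum_{|k-j|\le M}\widehat{\varphi_k}$) rather than into the data, and applies Hausdorff--Young globally in one stroke rather than coordinate by coordinate.
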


\begin{proof} We shall treat only the case of Triebel-Lizorkin spaces. The Besov space case is similar and we leave it for the reader.

Let $(\varphi_0,\varphi)$ be a couple satisfying (\ref{phi01})-(\ref{phi3}). From (\ref{phi3}) and bearing in mind (\ref{phi7}), we can verify that there exists $M\in\mathbb{N}$ such that, for every $j\ge0$
\begin{equation}\label{thproof1}
\widehat{\varphi_j}=\sum\limits_{k=j-M}^{j+M} \widehat{\varphi_k}\widehat{\varphi_j},
\end{equation}
with the convention that $\widehat{\varphi_k}\equiv0$ if $k<0$.

Let $f\in F^{s+\alpha}_{\vec{p}q}(\vec{a})$. We have for every $\xi\in\mathbb{R}^n$, using   equation (\ref{dfm}),
\begin{eqnarray}\label{thproof2}\nonumber
\widehat{\varphi_j}(\xi)\widehat{T_m f}(\xi)&=&\widehat{\varphi_j}(\xi)m(\xi)\hat{f}(\xi)\\ 
&=&\sum\limits_{k=j-M}^{j+M} m(\xi)\widehat{\varphi_k}(\xi)\widehat{\varphi_j}(\xi)\hat{f}(\xi).
\end{eqnarray}
We set
\begin{equation}\label{mj}
m_{(j)}(\xi):=2^{-j\alpha}m(\xi) \sum\limits_{k=j-M}^{j+M} \widehat{\varphi_k}(\xi),\;\;\xi\in\mathbb{R}^n,
\end{equation}
and
\begin{equation}\label{gj}
g_{(j)}(\xi):=m_{(j)}(2^{j\vec{a}}\xi),\;\;\xi\in\mathbb{R}^n.
\end{equation}

In the light of the above, and by the inverse Fourier transform $\mathcal{F}^{-1}$, (\ref{thproof2}) implies
\begin{equation}\label{thproof3}
\big(\varphi_j\ast T_mf \big)(x)= 2^{j\alpha}\big(\mathcal{F}^{-1}(m_{(j)})\ast (\varphi_j\ast f)\big)(x),\;\;x\in\mathbb{R}^n.
\end{equation}

From the selection of $N$, we can find $\varepsilon>0$ such that $N=\mu+t+2n\varepsilon$. We set $$N_k:=\frac{1}{\mu_k}+\frac{1}{t_k}+2\varepsilon,\;\;\text{for every}\;\;k=1,\dots,n$$and hence $N_1+\cdots+N_n=N$. We also introduce the vector $\vec{r}:=(r_1,\dots,r_n)$ where $r_k:=(1/\mu_k+\varepsilon)^{-1}$ for every $k=1,\dots,n$ and thus $$\frac{1}{r_k}=N_k-\Big(\varepsilon+\frac{1}{t_k}\Big),\;\;\text{for every}\;\;k=1,\dots,n.$$

By (\ref{thproof3}), we obtain
\begin{eqnarray}\label{thproof4}\nonumber
|\big(\varphi_j\ast T_mf \big)(x)|&\le&2^{j\alpha}\int_{\mathbb{R}^n}|\mathcal{F}^{-1}(m_{(j)})(y)||\varphi_j\ast f(x-y)|dy\nonumber \\
&\leq& 2^{j\alpha}\sup\limits_{z\in\mathbb{R}^n} \dfrac{|\varphi_j\ast f(x-z)|}{\prod\limits_{k=1}^n (1+|2^{ja_k}z_k|)^{1/r_k}}\times I,\;\;x\in\mathbb{R}^n,
\end{eqnarray}
where we have put
\begin{equation}\label{I}
I:=\int_{\mathbb{R}^n} |\mathcal{F}^{-1}(m_{(j)})(y)|\prod\limits_{k=1}^n (1+|2^{ja_k}y_k|)^{1/r_k} dy.
\end{equation}
Since $\supp (\widehat{\varphi_j\ast f})\subset 2^{j\vec{a}}R_1\subset 2^{j\vec{a}}[-2,2]^n$, we use the maximal inequality (\ref{M3}) to obtain that 
\begin{equation}\label{thproof5}
\sup\limits_{z\in\mathbb{R}^n} \dfrac{|\varphi_j\ast f(x-z)|}{\prod\limits_{k=1}^n (1+|2^{ja_k}z_k|)^{1/r_k}}
\leq c\mathcal{M}_{\vec{r}} (\varphi_j\ast f)(x),\;\;x\in\mathbb{R}^n.
\end{equation}

\noindent \textit{Estimation of I.}\ \\

By the definition of $g_{(j)}$ we have $\mathcal{F}^{-1}(m_{(j)})(y)=2^{j\nu}\mathcal{F}^{-1}(g_{(j)})(2^{j\vec{a}}y)$, so
\begin{eqnarray}\label{thproof6}\nonumber
I &=& \int_{\mathbb{R}^n} 2^{j\nu} |\mathcal{F}^{-1}(g_{(j)})(2^{j\vec{a}}y)| \prod\limits_{k=1}^n (1+|2^{ja_k}y_k|)^{1/r_k} dy \nonumber \\
&=& \int_{\mathbb{R}^n} |\mathcal{F}^{-1}(g_{(j)})(x)| \prod\limits_{k=1}^n (1+|x_k|)^{1/r_k} dx,
\end{eqnarray}
where for the last equality we changed to the variable $x:=2^{j\vec{a}}y$.\ \\

We now apply the mixed H\"{o}lder-inequality (\ref{mixedholder}) for the admissible $\vec{t}\in[1,2]^n$ and obtain 
\begin{equation}\label{thproof7}
I \leq \Big\|\mathcal{F}^{-1}(g_{(j)})(x) \prod\limits_{k=1}^n (1+|x_k|)^{N_k}\Big\|_{\vec{t'}}
\Big\|\prod\limits_{k=1}^n (1+|x_k|)^{-\big(\varepsilon+\frac{1}{t_k}\big)}\Big\|_{\vec{t}}.
\end{equation}
Now it is easy to observe that 
$$\Big\|\prod\limits_{k=1}^n (1+|x_k|)^{-\big(\varepsilon+\frac{1}{t_k}\big)}\Big\|_{\vec{t}}=\prod\limits_{k=1}^n \Big\|(1+|x_k|)^{-\big(\varepsilon+\frac{1}{t_k}\big)}\Big\|_{t_k}\leq c.$$
On the other hand, since $$\prod\limits_{k=1}^n (1+|x_k|)^{N_k}\leq (1+|x|)^N\leq c_N \sum\limits_{|\gamma|\leq N} |x^{\gamma}|,$$
we conclude that
\begin{eqnarray}\label{thproof8}\nonumber
I\leq c \Big\|\mathcal{F}^{-1}(g_{(j)})(x) \sum\limits_{|\gamma|\leq N}|x^\gamma|\Big\|_{\vec{t'}}
&\leq& c \sum\limits_{|\gamma|\leq N} \Big\|\mathcal{F}^{-1}(g_{(j)})(x) x^\gamma \Big\|_{\vec{t'}}\nonumber \\\
&=:& c \sum\limits_{|\gamma|\leq N} I_\gamma.
\end{eqnarray}

\noindent\textit{Estimation of} $I_\gamma$.\ \\

We have for every multi-index $|\gamma|\le N$,

\begin{eqnarray*}\nonumber
I_\gamma&=&\Big\|\mathcal{F}^{-1}(g_{(j)})(x)x^\gamma\Big\|_{\vec{t'}}
= \Big\|\mathcal{F}^{-1}(\partial^\gamma g_{(j)})(\cdot) \Big\|_{\vec{t'}}\nonumber \\
&=& \Big\|\mathcal{F}(\partial^\gamma g_{(j)})(-\cdot) \Big\|_{\vec{t'}}
= \Big\|\mathcal{F}(\partial^\gamma g_{(j)})(\cdot) \Big\|_{\vec{t'}}.
\end{eqnarray*}

By assumption, $\vec{t}$ is admissible, so we may apply the mixed Hausdorff-Young inequality (\ref{Penedek}) to  obtain

\begin{equation}\label{thproof9}
I_\gamma=\Big\|\mathcal{F}(\partial^\gamma g_{(j)})(\cdot) \Big\|_{\vec{t'}}
\leq \Big\|\partial^\gamma g_{(j)}(\cdot) \Big\|_{\vec{t}}.
\end{equation}
From the definitions of $m_{(j)}$ and $g_{(j)}$, it follows that $$(\partial^\gamma g_{(j)})(x)=2^{j\vec{a}\gamma}(\partial^\gamma m_{(j)})(2^{j\vec{a}}x).$$ Moreover, by a change of variables, we observe that
$$\|\partial^{\gamma} m_{(j)}(2^{j\vec{a}}\cdot)\|_{\vec{t}}=2^{-j\vec{a}\cdot\frac{1}{\vec{t}}}\|\partial^{\gamma} m_{(j)}(\cdot)\|_{\vec{t}}.$$ Moreover, by Leibniz's product rule,
\begin{eqnarray*}\nonumber
\big|\partial^{\gamma} m_{(j)}(x)\big|&\le& 2^{-j\alpha}\sum\limits_{k=j-M}^{j+M} \big|\partial^{\gamma}(m\widehat{\varphi_k})(x)\big|\nonumber \\
&\le& 2^{-j\alpha}\sum\limits_{k=j-M}^{j+M}\sum\limits_{\beta\leq \gamma} {\textstyle\binom{\gamma}{\beta}} \Big|(\partial^{\beta}m)(x)(\partial^{\gamma-\beta}\widehat{\varphi_k})(x)\Big|\nonumber \\
&\leq& c 2^{-j\alpha}\sum\limits_{\beta\leq \gamma} \big|\partial^{\beta}m(x)\big|,
\end{eqnarray*}
since $\big|(\partial^{\gamma-\beta}\widehat{\varphi_k})(x)\big|=2^{-k\vec{a}\cdot(\gamma-\beta)}\big|(\partial^{\gamma-\beta}\hat{\varphi})(2^{-k\vec{a}}x)\big|\leq c$ as $\beta\leq\gamma$ (recall that $\varphi_k\equiv0$, for $k<0$).

Combining (\ref{thproof9}) with  the above estimates, eq.\ \eqref{mixedRj}, and the fact that $\supp(m_{(j)})\subseteq \bigcup_{k=j-M}^{j+M} R_{k}$, we arrive at
\begin{eqnarray}\label{thproof10}\nonumber
I_\gamma &\leq& c 2^{j\vec{a}\cdot\gamma} 2^{-j\vec{a}\cdot\frac{1}{\vec{t}}}\|\partial^{\gamma} m_{(j)}(\cdot)\|_{L^{\vec{t}}(\cup_{k=j-M}^{j+M} R_{k})}\nonumber \\
&\leq& c 2^{-j\alpha}2^{j\vec{a}\cdot\gamma} 2^{-j\vec{a}\cdot\frac{1}{\vec{t}}}\sum\limits_{\beta\leq\gamma}\|\partial^{\beta} m(\cdot)\|_{L^{\vec{t}}(\cup_{k=j-M}^{j+M} R_{k})}
\nonumber \\
&\leq& c \sum\limits_{\beta\leq\gamma}\sum_{k=j-M}^{j+M}2^{-k\alpha}2^{k\vec{a}\cdot\gamma} 2^{-k\vec{a}\cdot\frac{1}{\vec{t}}}\|\partial^{\beta} m(\cdot)\|_{L^{\vec{t}}(R_{k})}.
\end{eqnarray}
By (\ref{thproof8}) and (\ref{thproof10}) we have
\begin{eqnarray}\label{thproof11}
I&\leq& c \sum\limits_{|\gamma|\leq N}\sum_{k=j-M}^{j+M} 2^{-k\alpha}2^{k\vec{a}\cdot\gamma} 2^{-k\vec{a}\cdot\frac{1}{\vec{t}}}\|\partial^{\beta} m(\cdot)\|_{L^{\vec{t}}(R_{k})}\nonumber \\
&\le& c A_{\alpha,\vec{t},N}(m)< c,
\end{eqnarray}
since $m$ belongs to the family $\mathcal{A}(\alpha,\vec{t},N)$. Combining the last with (\ref{thproof4}) and (\ref{thproof5}) we deduce that
\begin{equation}\label{thproof111}
\big|\big(\varphi_j\ast T_mf \big)(x)\big|\le c2^{j\alpha}\mathcal{M}_{\vec{r}} (\varphi_j\ast f)(x).
\end{equation}

We now pass to the Triebel-Lizorkin norm, and we apply the mixed-Fefferman-Stein maximal inequality  (\ref{max}) to conclude that
\begin{eqnarray}\label{thproof12}
\|T_m f\|_{F^{s}_{\vec{p}q}(\vec{a})}&=&\left\|\Big(\sum\limits_{j=0}^{\infty} (2^{js}|\varphi_j\ast T_m f|)^q\Big)^{1/q}\right\|_{\vec{p}}
\nonumber \\
&\leq& c \left\|\Big(\sum\limits_{j=0}^{\infty} (2^{js}2^{j\alpha}\mathcal{M}_{\vec{r}}(\varphi_j\ast f)^q\Big)^{1/q}\right\|_{\vec{p}}\nonumber \\
&\leq& c \left\|\Big(\sum\limits_{j=0}^{\infty} (2^{j(s+\alpha)}|\varphi_j\ast f|)^q\Big)^{1/q}\right\|_{\vec{p}}\le c\|f\|_{F^{s+\alpha}_{\vec{p}q}(\vec{a})},
\end{eqnarray}
which concludes the proof.

\end{proof}

\begin{remark} Let us consider the case $\vec{t}=(2,\dots,2)$. As  mentioned in Remark \ref{rema1}, our condition coincides with the (anisotropic) H\"{o}rmander condition with the extra parameter $\alpha\in\bR$ (as in \cite{CK,YYZ}). The smoothness level that we require for the multiplier $m$ is $N=\big[\mu+\frac{n}{2}\big]+1$ for $\mu=\frac{1}{\mu_1}+\cdots+\frac{1}{\mu_n}$, where $\mu_j=\min(p_1,\dots,p_j,q)$. This means that we ask for
$$N\le N_0:=\Big[\frac{n}{\min(p_1,\dots,p_n,q)}+\frac{n}{2}\Big]+1$$
derivatives on $m$. 

When $\vec{p}=(p,\dots,p)$, the index $N_0$ becomes the same as the one appearing by  Yang et al. in \cite{YYZ}.

Sharp multiplier results on Triebel-Lizorkin spaces have been proved by Triebel in \cite{T2}. The main tool used by Triebel in \cite{T2} is complex interpolation, and currently such tools are not available in the mixed-norm setting.
\end{remark}

\section{Special Cases}

\subsection{Fourier multipliers on anisotropic mixed-norm Sobolev spaces}

One of the  main motivation for studying Triebel-Lizorkin and Besov spaces is that for specific choices of parameters many well-know spaces of harmonic analysis can be recovered. Let us mention the special case mixed-norm Sobolev and generalized Sobolev spaces, see \cite{L}. Such spaces provide a natural setting for the study of partial differential operators.
 
Let $\vec{p}\in(1,\infty)^n$ and $\vec{k}\in\mathbb{N}_0^n,$ the \textit{mixed-norm Sobolev space} $W^{\vec{k}}_{\vec{p}}$ is defined, as the set of all $f\in \mathcal{S}'$ such that 
\begin{equation}\label{sobnorm}
\|f\|_{W^{\vec{k}}_{\vec{p}}}:=\|f\|_{\vec{p}}+\sum\limits_{j=1}^{n}\Big\|\frac{\partial^{k_j}f}{\partial x_j^{k_j}}\Big\|_{\vec{p}}<\infty.
\end{equation} 
Note that $W^{\vec{0}}_{\vec{p}}=L^{\vec{p}},$ for every $\vec{p}\in(1,\infty)^n.$

Now let $\vec{p}\in(1,\infty)^n,\;s\in\mathbb{R}$ and $\vec{a}\in[1,\infty)^n,$ the \textit{anisotropic mixed-norm generalized Sobolev space} $H^{s}_{\vec{p}}(\vec{a})$ is defined, as the set of all $f\in \mathcal{S}'$ such that 
\begin{equation}\label{gensobnorm}
\|f\|_{H^{s}_{\vec{p}}(\vec{a})}:=\Big\|\cF^{-1}\big((1+|\xi|_{\vec{a}}^2)^{s/2}\hat{f}
\big)(\cdot)\Big\|_{\vec{p}}<\infty.
\end{equation} 

We have the \textit{identifications}:
\

\noindent 1. When $\vec{p}\in(1,\infty)^n,\;s\in\mathbb{R}$ and $\vec{a}\in[1,\infty)^n,$ then
$$F^{s}_{\vec{p}2}(\vec{a})=H^{s}_{\vec{p}}(\vec{a}),\;\;\text{with equivalent norms}.$$
2. When $\vec{p}\in(1,\infty)^n,\;s\in\mathbb{R},\;\vec{k}\in\mathbb{N}_0^n$ and $\vec{a}\in[1,\infty)^n,$ satisfying $k_j=s/a_j,$ for all $j=1,\dots,n,$ then
$$F^{s}_{\vec{p}2}(\vec{a})=W^{\vec{k}}_{\vec{p}},\;\;\text{with equivalent norms. Especially}\;\;F^{0}_{\vec{p}2}(\vec{a})=L^{\vec{p}}.$$

Before we present our Corollaries, we fix an admissible $\vec{t}$ and we keep $q=2$. Then for every $\vec{p}$, we have $\mu_j=\min(p_1,\dots,p_n,2),\;j=1,\dots,n$. Finally recall that $t=\frac{1}{t_1}+\cdots+\frac{1}{t_n}$ and $\mu=\frac{1}{\mu_1}+\cdots+\frac{1}{\mu_n}$.

By all the above, Theorem \ref{Thmain} implies the following:
\begin{corollary}\label{Csobolev} Let $\alpha,s\in\mathbb{R},\;\vec{p}\in(1,\infty)^n,\;N\in\bN$ and $\vec{a}\in[1,\infty)^n.$ If $m\in \mathcal{A}(\alpha,\vec{t},N)$, for $N>\mu+t$, then the Fourier multiplier $T_{m}$ is bounded from $H^{s+\alpha}_{\vec{p}}(\vec{a})$ to $H^{s}_{\vec{p}}(\vec{a}).$
\end{corollary}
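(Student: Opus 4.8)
The plan is to derive Corollary~\ref{Csobolev} directly from Theorem~\ref{Thmain} by exploiting the identification $F^s_{\vec p 2}(\vec a)=H^s_{\vec p}(\vec a)$ recorded just above the statement. First I would fix the admissible vector $\vec t$ and set $q=2$ throughout; with this choice Definition~\ref{def2} gives $\mu_j=\min(p_1,\dots,p_j,2)$ for $j=1,\dots,n$, and hence the quantity $\mu=\frac1{\mu_1}+\cdots+\frac1{\mu_n}$ appearing in the hypothesis $N>\mu+t$ is exactly the one governing the applicability of Theorem~\ref{Thmain} for $F$-spaces with integrability exponent $q=2$. Since $\vec p\in(1,\infty)^n\subset(0,\infty)^n$ and $q=2\in(0,\infty]$, all structural assumptions of Theorem~\ref{Thmain} are met, so for $m\in\mathcal A(\alpha,\vec t,N)$ with $N>\mu+t$ the operator $T_m$ maps $F^{s+\alpha}_{\vec p 2}(\vec a)$ boundedly into $F^{s}_{\vec p 2}(\vec a)$, for every $s\in\mathbb R$.

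The second and final step is to translate this into the generalized Sobolev scale. By identification~1 above, for $\vec p\in(1,\infty)^n$, any $\sigma\in\mathbb R$ and $\vec a\in[1,\infty)^n$ we have $F^{\sigma}_{\vec p 2}(\vec a)=H^{\sigma}_{\vec p}(\vec a)$ with equivalent norms; applying this once with $\sigma=s+\alpha$ to the domain and once with $\sigma=s$ to the target, the $F$-space boundedness of $T_m$ transfers verbatim to a bounded map $T_m\colon H^{s+\alpha}_{\vec p}(\vec a)\to H^{s}_{\vec p}(\vec a)$, with operator norm controlled by the $F$-space operator norm times the two norm-equivalence constants. This completes the argument.

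There is essentially no obstacle here: the corollary is a pure specialization, and the only thing to be careful about is bookkeeping — that the parameter $N$ in the hypothesis refers to the $q=2$ value of $\mu$, and that the norm equivalences in identification~1 are uniform enough (they depend only on $\vec p$, $s$, $\vec a$ and the fixed analyzing functions $\varphi_0,\varphi$, not on $m$ or $f$) for the transfer of boundedness to go through. If one wanted to be fully explicit one could also remark that Theorem~\ref{Thmain} is stated for $f$ in the relevant space and $T_m$ a priori defined on $\mathcal S$, so a density/duality remark identifies the extension, but since $F^{\sigma}_{\vec p 2}(\vec a)=H^\sigma_{\vec p}(\vec a)$ as sets of tempered distributions this is automatic. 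Hence the proof is a one-line invocation of Theorem~\ref{Thmain} followed by the identification.

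\begin{proof}
Fix the admissible vector $\vec t$ and take $q=2$. For $\vec p\in(1,\infty)^n$ Definition~\ref{def2} gives $\mu_j=\min(p_1,\dots,p_j,2)$, hence $\mu=\frac1{\mu_1}+\cdots+\frac1{\mu_n}$, and the hypothesis $N>\mu+t$ is precisely the one required in Theorem~\ref{Thmain} for the Triebel--Lizorkin space $F^{\bullet}_{\vec p 2}(\vec a)$. Since $\vec p\in(0,\infty)^n$, $q=2\in(0,\infty]$, $\vec a\in[1,\infty)^n$ and $m\in\mathcal A(\alpha,\vec t,N)$ with $N>\mu+t$, Theorem~\ref{Thmain} yields that
\begin{equation*}
T_m\colon F^{s+\alpha}_{\vec p 2}(\vec a)\longrightarrow F^{s}_{\vec p 2}(\vec a)\quad\text{is bounded, for every }s\in\mathbb R.
\end{equation*}
By identification~1 above, $F^{s+\alpha}_{\vec p 2}(\vec a)=H^{s+\alpha}_{\vec p}(\vec a)$ and $F^{s}_{\vec p 2}(\vec a)=H^{s}_{\vec p}(\vec a)$ with equivalent norms (the equivalence constants depending only on $\vec p$, $\vec a$, the smoothness indices and the fixed functions $\varphi_0,\varphi$, not on $m$ or $f$). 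Composing the two norm equivalences with the displayed boundedness, we conclude that $T_m$ is bounded from $H^{s+\alpha}_{\vec p}(\vec a)$ to $H^{s}_{\vec p}(\vec a)$, with
\begin{equation*}
\|T_m f\|_{H^{s}_{\vec p}(\vec a)}\le c\,\|T_m f\|_{F^{s}_{\vec p 2}(\vec a)}\le c\,\|f\|_{F^{s+\alpha}_{\vec p 2}(\vec a)}\le c\,\|f\|_{H^{s+\alpha}_{\vec p}(\vec a)}.
\end{equation*}
This proves the corollary.
\end{proof}
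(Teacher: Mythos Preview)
Your proposal is correct and follows exactly the paper's own approach: the corollary is obtained by specializing Theorem~\ref{Thmain} to $q=2$ and invoking identification~1, $F^{s}_{\vec{p}2}(\vec{a})=H^{s}_{\vec{p}}(\vec{a})$. The paper does not give a separate proof beyond the sentence ``By all the above, Theorem~\ref{Thmain} implies the following,'' so your write-up is simply a more explicit version of what the authors leave implicit.
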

By the identification of mixed-norm Triebel-Lizorkin with Sobolev spaces, Theorem \ref{Thmain}  also offers the following Corollary:
\begin{corollary}\label{Csobolev2} Let $\alpha,s\in\mathbb{R},\;\vec{p}\in(1,\infty)^n,\;\vec{a}\in[1,\infty)^n$ be such that
$$\vec{k}:=\left(\frac{s}{a_1},\dots,\frac{s}{a_n}\right),\;\vec{\ell}:=\left(\frac{\alpha}{a_1},\dots,\frac{\alpha}{a_n}\right)\in\mathbb{N}_0^{n}.$$ If $m\in\mathcal{A}(\alpha,\vec{t},N)$, for $N>\mu+t$, then the Fourier multiplier  $T_{m}$ is bounded from $W^{\vec{k}+\vec{\ell}}_{\vec{p}}$ to $W^{\vec{k}}_{\vec{p}}.$
\end{corollary}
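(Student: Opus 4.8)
The plan is to derive Corollary \ref{Csobolev2} directly from Corollary \ref{Csobolev} (equivalently from Theorem \ref{Thmain} with $q=2$), exploiting the chain of identifications recorded just above the statement. First I would observe that the two hypotheses on $\vec{k}$ and $\vec{\ell}$ say precisely that $k_j = s/a_j$ and $\ell_j = \alpha/a_j$ are non-negative integers for every $j$, so that $\vec{k}+\vec{\ell} = \big((s+\alpha)/a_1,\dots,(s+\alpha)/a_n\big)\in\bN_0^n$ as well. Hence identification~2 above applies at both smoothness levels: $F^{s+\alpha}_{\vec{p}2}(\vec{a}) = W^{\vec{k}+\vec{\ell}}_{\vec{p}}$ and $F^{s}_{\vec{p}2}(\vec{a}) = W^{\vec{k}}_{\vec{p}}$, with equivalent norms in each case.

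Next, I would invoke Theorem \ref{Thmain} with the parameter choice $q=2$. As noted in the paragraph preceding Corollary \ref{Csobolev}, with $q=2$ one has $\mu_j = \min(p_1,\dots,p_j,2)$ and the relevant threshold is $N>\mu+t$, which is exactly the hypothesis made. Since $m\in\mathcal{A}(\alpha,\vec{t},N)$, Theorem \ref{Thmain} gives that $T_m$ is bounded from $F^{s+\alpha}_{\vec{p}2}(\vec{a})$ to $F^{s}_{\vec{p}2}(\vec{a})$. Composing this with the two norm equivalences from the previous step — i.e.\ $\|T_m f\|_{W^{\vec{k}}_{\vec{p}}} \asymp \|T_m f\|_{F^{s}_{\vec{p}2}(\vec{a})} \le c\,\|f\|_{F^{s+\alpha}_{\vec{p}2}(\vec{a})} \asymp c\,\|f\|_{W^{\vec{k}+\vec{\ell}}_{\vec{p}}}$ — yields the claimed boundedness from $W^{\vec{k}+\vec{\ell}}_{\vec{p}}$ to $W^{\vec{k}}_{\vec{p}}$.

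There is essentially no analytic obstacle here; the content of the corollary is entirely in Theorem \ref{Thmain} and in the identification $F^{s}_{\vec{p}2}(\vec{a}) = W^{\vec{k}}_{\vec{p}}$, the latter being quoted from the literature (\cite{L,JS}) rather than proved in this paper. The only point requiring a word of care is the bookkeeping that $\vec{k}+\vec{\ell}$ is again a vector of non-negative integers and that it corresponds to smoothness $s+\alpha$ under the same anisotropy $\vec{a}$, so that identification~2 is legitimately applicable at the higher level; this is immediate from $k_j+\ell_j = (s+\alpha)/a_j$. I would therefore write the proof as a short two- or three-sentence argument chaining these facts, with no need for estimates.

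\begin{proof}
By hypothesis $k_j=s/a_j$ and $\ell_j=\alpha/a_j$ are non-negative integers for every $j=1,\dots,n$, hence so is $k_j+\ell_j=(s+\alpha)/a_j$, i.e.\ $\vec{k}+\vec{\ell}\in\bN_0^n$. Applying identification~2 above at the smoothness levels $s$ and $s+\alpha$ we obtain $F^{s}_{\vec{p}2}(\vec{a})=W^{\vec{k}}_{\vec{p}}$ and $F^{s+\alpha}_{\vec{p}2}(\vec{a})=W^{\vec{k}+\vec{\ell}}_{\vec{p}}$, with equivalent norms in both cases. On the other hand, since $m\in\mathcal{A}(\alpha,\vec{t},N)$ with $N>\mu+t$ (and $q=2$, so that $\mu_j=\min(p_1,\dots,p_j,2)$), Theorem \ref{Thmain} ensures that $T_m$ is bounded from $F^{s+\alpha}_{\vec{p}2}(\vec{a})$ to $F^{s}_{\vec{p}2}(\vec{a})$. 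Combining these facts,
\begin{equation*}
\|T_m f\|_{W^{\vec{k}}_{\vec{p}}}\asymp\|T_m f\|_{F^{s}_{\vec{p}2}(\vec{a})}\le c\,\|f\|_{F^{s+\alpha}_{\vec{p}2}(\vec{a})}\asymp c\,\|f\|_{W^{\vec{k}+\vec{\ell}}_{\vec{p}}},
\end{equation*}
so $T_m$ is bounded from $W^{\vec{k}+\vec{\ell}}_{\vec{p}}$ to $W^{\vec{k}}_{\vec{p}}$.
\end{proof}
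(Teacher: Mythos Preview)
Your proposal is correct and follows exactly the approach indicated in the paper: the corollary is not given a separate proof there but is simply introduced as a direct consequence of Theorem \ref{Thmain} (with $q=2$) combined with the identification $F^{s}_{\vec{p}2}(\vec{a})=W^{\vec{k}}_{\vec{p}}$. Your write-up makes explicit the one bookkeeping point (that $\vec{k}+\vec{\ell}\in\bN_0^n$ so the identification also applies at level $s+\alpha$) and then chains the norm equivalences, which is precisely what the paper intends.
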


Let us restrict our attention  to the isotropic case; $\vec{a}=(1,\dots,1)$. Then we recover the following recent results by Antoni\'{c} and Ivec \cite{AI}:

\begin{corollary}\label{Csobolev3} Let $\alpha,s\in\mathbb{N}_0$ and $\vec{p}\in(1,\infty)^n.$ If $m\in \mathcal{A}(\alpha,\vec{t},N)$, for $N>\mu+t$, then the Fourier multiplier   $T_{m}$ is bounded from $W^{s+\alpha}_{\vec{p}}$ to $W^{s}_{\vec{p}}.$ Especially when $\alpha=0,$ then $T_{m}$ is bounded on $L^{\vec{p}}$.
\end{corollary}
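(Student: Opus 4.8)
The plan is to deduce Corollary \ref{Csobolev3} directly from Corollary \ref{Csobolev2} by specializing the anisotropy. First I would set $\vec{a}=(1,\dots,1)$, so that the homogeneous anisotropic norm $|\cdot|_{\vec{a}}$ becomes the Euclidean norm, $\nu=n$, and all the anisotropic dilations $2^{j\vec{a}}$ reduce to the ordinary dyadic dilations $2^j$. Under this choice, for $s\in\bN_0$ the vector $\vec{k}=(s/a_1,\dots,s/a_n)=(s,\dots,s)\in\bN_0^n$ and likewise $\vec{\ell}=(\alpha,\dots,\alpha)\in\bN_0^n$, so the hypotheses of Corollary \ref{Csobolev2} are met automatically (no extra divisibility constraint is needed since $a_j=1$). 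Thus $T_m$ is bounded from $W^{\vec{k}+\vec{\ell}}_{\vec{p}}=W^{(s+\alpha,\dots,s+\alpha)}_{\vec{p}}$ to $W^{\vec{k}}_{\vec{p}}=W^{(s,\dots,s)}_{\vec{p}}$.

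The one genuine point to check is that in the isotropic case the mixed-norm Sobolev space $W^{(s,\dots,s)}_{\vec{p}}$ coincides (with equivalent norms) with the space $W^{s}_{\vec{p}}$ appearing in the statement, i.e. the scalar-order mixed-norm Sobolev space defined by requiring $\partial^\gamma f\in L^{\vec{p}}$ for all $|\gamma|\le s$. This is where I expect the only real work: the definition \eqref{sobnorm} only controls the pure partial derivatives $\partial^{k_j}_{x_j}f$, whereas $W^s_{\vec p}$ in the sense of \cite{AI} controls all mixed partials of total order $\le s$. The equivalence follows from the identification $F^{s}_{\vec{p}2}(\vec{a})=W^{\vec k}_{\vec p}$ stated in the excerpt together with the corresponding Littlewood--Paley characterization of $W^s_{\vec p}$; alternatively, one invokes the boundedness on $L^{\vec p}$ of the iterated Riesz-type transforms $\partial^\gamma(\Id-\Delta_{x_1}\cdots)^{-\cdots}$, but since $\vec p\in(1,\infty)^n$ this is covered by the very multiplier theorem at hand (apply Corollary \ref{Csobolev} with $\alpha=0$ to the smooth bounded symbol $(i\xi)^\gamma\prod_j(1+\xi_j^2)^{-s/2}$, which manifestly lies in $\mathcal{A}(0,\vec t,N)$ for every $N$). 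So the chain $W^{(s,\dots,s)}_{\vec p}\simeq H^s_{\vec p}((1,\dots,1))\simeq W^s_{\vec p}$ closes the gap.

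Finally, for the last sentence, I would simply put $\alpha=0$ and $s=0$: then $W^{0}_{\vec p}=L^{\vec p}$ by the remark following \eqref{sobnorm}, and the condition $N>\mu+t$ is still available since $\mu,t$ depend only on $\vec p$ and $\vec t$, so $T_m$ is bounded on $L^{\vec p}$. I would also remark that in the isotropic case the admissibility/ordering condition on $\vec t$ can be satisfied by any $\vec t\in[1,2]^n$ after a harmless permutation of coordinates, so the hypothesis $m\in\mathcal{A}(\alpha,\vec t,N)$ is no more restrictive than in \cite{AI}. The main obstacle, to reiterate, is purely the identification of the two notions of mixed Sobolev space; everything else is an immediate specialization of Theorem \ref{Thmain}.
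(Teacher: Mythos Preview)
Your approach is correct and coincides with the paper's: the corollary is stated there without separate proof, as the immediate specialization of Corollary~\ref{Csobolev2} (hence of Theorem~\ref{Thmain}) to $\vec{a}=(1,\dots,1)$, so that $\vec{k}=(s,\dots,s)$, $\vec{\ell}=(\alpha,\dots,\alpha)$, and $W^{\vec 0}_{\vec p}=L^{\vec p}$ handles the case $\alpha=s=0$. Your extra discussion of the identification $W^{(s,\dots,s)}_{\vec p}=W^{s}_{\vec p}$ is not needed in the paper's convention (the scalar superscript is just shorthand for the constant vector), and your closing aside that admissibility of $\vec t$ can be arranged by a ``harmless permutation of coordinates'' should be dropped, since mixed-norm spaces $L^{\vec p}$ are not invariant under such permutations; neither point, however, affects the validity of the main argument.
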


\subsection{Equivalent characterizations} We conclude the paper by considering one explicit example of a bounded Fourier multiplier that can be used to obtain an equivalent norm of anisotropic mixed-norm Besov and Triebel-Lizorkin spaces.

Let $\vec{a}\in [1,\infty)^n$. We consider the following \textit{anisotropic bracket},
$$\brac{x}:=|(1,x)|_{(1,\vec{a})},\qquad x\in\bR^n.$$ 
This quantity has been studied in detail in \cite{BN}. It is known that $\brac{\cdot}\in\cC^\infty(\bR^n)$ and that for $\alpha\in\bR$, $\gamma\in \bN_0^n$, there are constants $c_\gamma, c_\gamma'>0$ such that for every $\xi\in\bR^n$
\begin{equation}\label{dbrac}
 |\partial^\gamma\brac{\xi}^\alpha|\leq c_\gamma\brac{\xi}^{\alpha-\vec{a}\cdot\gamma}\le c_\gamma'(1+|\xi|_{\vec{a}})^{\alpha-\vec{a}\cdot\gamma}. 
  \end{equation}
So the multiplier $m_\alpha (\xi):=\brac{\xi}^\alpha,\;\alpha\in\bR$, satisfies the Mihlin condition (\ref{multiinfty}) for arbitrary $N\in\bN$. By Theorem \ref{Thmain}, the multiplier $T_{m_\alpha}$ is bounded from $F^{s+\alpha}_{\vec{p}q}(\vec{a})$ to $F^{s}_{\vec{p}q}(\vec{a})$
and from $B^{s+\alpha}_{\vec{p}q}(\vec{a})$ to $B^{s}_{\vec{p}q}(\vec{a})$. Moreover, we observe that $T_{m_\alpha}\circ T_{m_{-\alpha}}$ is the identity on $\cS'$ and thus we have the following characterization:

\begin{corollary}\label{Cequivallent} Let $\alpha,s\in\mathbb{R},\;\vec{p}=(p_1,\dots,p_n)\in (0,\infty)^n,\;q\in(0,\infty]$ and $\vec{a}\in [1,\infty)^n$. Then $\|T_{m_\alpha}f\|_{B^{s-\alpha}_{\vec{p}q}(\vec{a})}$ and $\|T_{m_\alpha}f\|_{F^{s-\alpha}_{\vec{p}q}(\vec{a})}$ are equivalent quasi-norms on $B^{s}_{\vec{p}q}(\vec{a})$ and $F^{s}_{\vec{p}q}(\vec{a})$, respectively.
\end{corollary}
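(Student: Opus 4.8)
The plan is to deduce Corollary \ref{Cequivallent} directly from Theorem \ref{Thmain} applied to the two multipliers $m_\alpha$ and $m_{-\alpha}$, together with the fact that these two Fourier multipliers are mutually inverse on $\cS'$. First I would record that, by \eqref{dbrac}, for every $N\in\bN$ the multiplier $m_\alpha(\xi)=\brac{\xi}^\alpha$ satisfies the Mihlin-type $L^\infty$-condition \eqref{multiinfty}, hence (by Remark \ref{rema1}, part 2, i.e. \eqref{1,t,2}) also the $L^{\vec t}$-condition \eqref{A_tp} for any admissible $\vec t$ and any $N$; in particular we may pick $N>\mu+t$ so that $m_\alpha\in\cA(\alpha,\vec t,N)$ and likewise $m_{-\alpha}\in\cA(-\alpha,\vec t,N)$. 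Theorem \ref{Thmain} then gives that $T_{m_\alpha}$ is bounded from $F^{s}_{\vec pq}(\vec a)$ to $F^{s-\alpha}_{\vec pq}(\vec a)$ and from $B^{s}_{\vec pq}(\vec a)$ to $B^{s-\alpha}_{\vec pq}(\vec a)$, and that $T_{m_{-\alpha}}$ is bounded from $F^{s-\alpha}_{\vec pq}(\vec a)$ back to $F^{s}_{\vec pq}(\vec a)$ (similarly for Besov). This already yields the inequality $\|T_{m_\alpha}f\|_{F^{s-\alpha}_{\vec pq}(\vec a)}\le c\|f\|_{F^{s}_{\vec pq}(\vec a)}$.

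For the reverse inequality I would use that $m_\alpha(\xi)m_{-\alpha}(\xi)=\brac{\xi}^{\alpha}\brac{\xi}^{-\alpha}=1$ pointwise (note $\brac{\xi}\ge c>0$, so no division issue), and that both $m_{\pm\alpha}$ are bounded smooth functions of at most polynomial growth, so that $T_{m_\alpha}T_{m_{-\alpha}}=T_{m_{-\alpha}}T_{m_\alpha}=\mathrm{Id}$ on $\cS'$ (this is the composition law for Fourier multipliers, valid because each multiplier and all its derivatives are polynomially bounded, hence each $T_{m_{\pm\alpha}}$ maps $\cS\to\cS$ and extends to $\cS'$ by duality, and the Fourier side identity is immediate). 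Then for $f\in F^{s}_{\vec pq}(\vec a)$,
\[
\|f\|_{F^{s}_{\vec pq}(\vec a)}=\|T_{m_{-\alpha}}T_{m_\alpha}f\|_{F^{s}_{\vec pq}(\vec a)}\le c\,\|T_{m_\alpha}f\|_{F^{s-\alpha}_{\vec pq}(\vec a)},
\]
using the boundedness of $T_{m_{-\alpha}}:F^{s-\alpha}_{\vec pq}(\vec a)\to F^{s}_{\vec pq}(\vec a)$. Combining the two inequalities gives $\|f\|_{F^{s}_{\vec pq}(\vec a)}\asymp\|T_{m_\alpha}f\|_{F^{s-\alpha}_{\vec pq}(\vec a)}$, which is exactly the claimed equivalence of quasi-norms; the Besov statement follows verbatim with $F$ replaced by $B$.

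The only genuinely delicate point — and the one I would be careful to phrase precisely — is the assertion that $T_{m_\alpha}$ and $T_{m_{-\alpha}}$ are inverse to one another \emph{as operators on the relevant distribution spaces}, so that the chain of inequalities above is legitimate. One has to check that $T_{m_{\pm\alpha}}$, a priori defined by \eqref{dfm} on $\cS$, has a well-defined extension to $\cS'$ (hence in particular to $F^{s}_{\vec pq}(\vec a)\subset\cS'$) that still intertwines with multiplication by $m_{\pm\alpha}$ on the Fourier side; this is standard since $m_{\pm\alpha}\in\cC^\infty$ with all derivatives of polynomial growth by \eqref{dbrac}, so $T_{m_{\pm\alpha}}$ maps $\cS$ continuously into itself and one defines it on $\cS'$ by transposition, after which $\widehat{T_{m_\alpha}u}=m_\alpha\hat u$ for all $u\in\cS'$ and the composition identity $m_\alpha m_{-\alpha}\equiv1$ gives $T_{m_\alpha}T_{m_{-\alpha}}=\mathrm{Id}$ on $\cS'$. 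Everything else is a direct invocation of Theorem \ref{Thmain}. I would also remark that $T_{m_{-\alpha}}$ being bounded $F^{s-\alpha}_{\vec pq}(\vec a)\to F^{s}_{\vec pq}(\vec a)$ is the case of Theorem \ref{Thmain} with the smoothness index $s$ there replaced by $s-\alpha$ and the multiplier parameter by $-\alpha$, so no new computation is needed.
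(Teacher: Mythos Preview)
Your proof is correct and follows essentially the same route as the paper: verify via \eqref{dbrac} that $m_{\pm\alpha}$ satisfies the Mihlin condition \eqref{multiinfty} for every $N$, invoke Theorem \ref{Thmain} for both $m_\alpha$ and $m_{-\alpha}$, and use that $T_{m_\alpha}\circ T_{m_{-\alpha}}=\mathrm{Id}$ on $\cS'$. Your write-up is in fact more careful than the paper's terse justification, since you spell out why the $L^\infty$-condition implies membership in $\cA(\alpha,\vec t,N)$ and why the composition identity holds on all of $\cS'$.
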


\end{document}